\newtheorem{thm}{Theorem}
\newtheorem{prop}[thm]{Proposition}
\newtheorem{lem}[thm]{Lemma}
\theoremstyle{definition}
\theoremstyle{remark} 
\theoremstyle{definition}
\newcounter{constant}
\newcommand{\newconstant}[1]{\refstepcounter{constant}\label{#1}}
\newcommand{\useconstant}[1]{c_{\ref{#1}}}
\newcommand{\defconstant}[1]{ \newconstant{c_{#1}}\expandafter\newcommand\csname c#1\endcsname{\useconstant{c_{#1}}} }
\newcommand{\Q}{\overline{\mathbb Q}}
\newcommand{\N}{\mathbb{N}}
\newcommand{\M}{\mathcal M}
\newcommand{\I}{\mathcal I}
\newcommand{\z}{{\boldsymbol{z}}}
\newcommand{\lambd}{{\boldsymbol{\lambda}}}
\newcommand{\bmu}{{\boldsymbol{\mu}}}
\newcommand{\gamm}{{\boldsymbol{\gamma}}}
\newcommand{\kapp}{{\boldsymbol{\kappa}}}
\newcommand{\bbeta}{{\boldsymbol{\beta}}}
\numberwithin{equation}{section} 
\title{Addendum to: Mahler's method in several variables and finite automata}
\author{Boris Adamczewski}
\address{
Univ Lyon, Universit\'e Claude Bernard Lyon 1\\
 CNRS UMR 5208, Institut Camille Jordan \\
 F-69622 Villeurbanne Cedex, France}
\email{boris.adamczewski@math.cnrs.fr}
\author{Colin Faverjon}
\address{
Univ Lyon, Universit\'e Claude Bernard Lyon 1\\
 CNRS UMR 5208, Institut Camille Jordan \\
 F-69622 Villeurbanne Cedex, France}
\email{colin.faverjon@riseup.net}
\date{}
\thanks{This project has received funding from the European Research Council (ERC) under the 
European Union's Horizon 2020 research and innovation programme 
under the Grant Agreement No 648132. }
\begin{document}

\maketitle

The aim of this note is to prove the following extension of one of the main results of \cite{AF24_Annals} concerning the algebraic independence of values of 
$M$-functions at multiplicatively independent algebraic points. We retain the notations introduced in  \cite{AF24_Annals}.

\begin{thm}\label{thm: mainbis}
	Let $r\geq 1$ be an integer and $\mathbb K\subseteq \Q$ be a field. 
	For every integer $i$, $1\leq  i\leq r$, we let $q_i\geq 2$ be an integer, 
	$f_i(z)\in \mathbb K[[z]]$ be an $M_{q_i}$-function, 
	and $\alpha_i\in \mathbb K$, $0 <\vert\alpha_i\vert <1$, be such that $f_i(z)$ 
	is well-defined at $\alpha_i$. Let us assume that the numbers $\alpha_1,\ldots, \alpha_r$ are pairwise multiplicatively independent. 
	Then  $f_1(\alpha_1),f_2(\alpha_2),\ldots,f_r(\alpha_r)$ are algebraically independent over $\Q$, unless  
	one of them belongs to $\mathbb K$.   	
\end{thm}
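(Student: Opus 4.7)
I plan to deduce Theorem~\ref{thm: mainbis} from the main multivariable result of \cite{AF24_Annals} by packaging the $r$ one-variable Mahler equations into a single system in several variables. Set $\boldsymbol{z}=(z_1,\ldots,z_r)$ and consider the substitution
\[
T\colon\boldsymbol{z}\longmapsto(z_1^{q_1},\ldots,z_r^{q_r}),
\]
attached to the diagonal integer matrix $A=\mathrm{diag}(q_1,\ldots,q_r)$. Viewing each $f_i(z_i)$ as an element of $\mathbb{K}[[\boldsymbol{z}]]$, the vector $(f_1(z_1),\ldots,f_r(z_r))$ is a solution of a block-diagonal Mahler system with respect to $T$, the $i$-th block being simply the $M_{q_i}$-system satisfied by $f_i$ in the variable $z_i$.

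\noindent\textbf{Main reduction and key difficulty.} I then apply the algebraic independence theorem of \cite{AF24_Annals} to this system evaluated at $\boldsymbol{\alpha}=(\alpha_1,\ldots,\alpha_r)$: provided the pair $(T,\boldsymbol{\alpha})$ is admissible in the sense of that paper, the theorem identifies algebraic relations over $\Q$ among the numbers $f_i(\alpha_i)$ with algebraic relations over $\Q(\boldsymbol{z})$ among the functions $f_i(z_i)$. The matrix $A$ is non-degenerate for free, and the iterates $T^n(\boldsymbol{\alpha})=(\alpha_1^{q_1^n},\ldots,\alpha_r^{q_r^n})$ both tend to $\boldsymbol{0}$ (since $|\alpha_i|<1$) and eventually avoid singularities (since each $f_i$ is well defined at $\alpha_i$). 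The heart of the argument is to show that the remaining admissibility condition---non-degeneracy of the forward $T$-orbit of $\boldsymbol{\alpha}$ inside any proper exceptional subvariety---follows from pairwise multiplicative independence of the $\alpha_i$'s. Any such degeneracy would produce a non-trivial relation of the form $\sum_j c_j\prod_i \alpha_i^{m_{i,j}q_i^n}=0$ valid for infinitely many $n$; extracting the dominant term as $n\to\infty$ (using $q_i\geq 2$) should collapse this into a multiplicative relation between two of the $\alpha_i$'s, contradicting the hypothesis. The delicate case is when several $q_i$'s are themselves multiplicatively dependent (for instance $q_1=2,\ q_2=4$): here the exponents $q_i^n$ grow at commensurable rates and the dominant-term extraction requires substantially more bookkeeping. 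I expect this to be the main technical step of the proof.

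\noindent\textbf{Conclusion.} Once admissibility has been verified, any non-trivial algebraic relation over $\Q$ among $f_1(\alpha_1),\ldots,f_r(\alpha_r)$ lifts to a non-trivial algebraic relation over $\Q(\boldsymbol{z})$ among the functions $f_1(z_1),\ldots,f_r(z_r)$. Because these functions live in separated variables, a standard specialization argument (fixing all variables but $z_i$ to generic algebraic values in the domain of convergence) forces at least one $f_i$ to be algebraic over $\Q(z_i)$. By the Nishioka-type rationality theorem for $M$-functions recalled in \cite{AF24_Annals}, such an $f_i$ must already lie in $\mathbb{K}(z_i)$; evaluating at $\alpha_i\in\mathbb{K}$ then yields $f_i(\alpha_i)\in\mathbb{K}$, which is precisely the exceptional case allowed by the theorem.
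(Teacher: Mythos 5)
There is a genuine gap, and it sits exactly at the point you flag as ``the main technical step'': the admissibility of the pair $(T,\boldsymbol{\alpha})$ for your diagonal system in separated variables cannot be salvaged from pairwise multiplicative independence. The lifting theorem of \cite{AF24_Annals} (Corollary~3.5) requires admissibility, and the tool for verifying it (Theorem~5.9 of that paper) demands that the coordinates of the evaluation point be \emph{globally} multiplicatively independent. Pairwise independence does not give this: for $\alpha_1=\tfrac12$, $\alpha_2=\tfrac15$, $\alpha_3=\tfrac1{10}$ the point $(\alpha_1,\alpha_2,\alpha_3)$ satisfies $z_3=z_1z_2$, so the whole forward orbit lies on a proper multiplicative subvariety no matter what the $q_i$ are. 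This is not a relation ``valid for infinitely many $n$'' that a dominant-term extraction could collapse to a pairwise relation; it is an identity holding for every $n$, and it is precisely the obstruction that forced part (i) of \cite[Theorem~1.1]{AF24_Annals} to assume global multiplicative independence. In other words, your strategy reproves the already-known case and defers the entire new content of the theorem to a step that fails.

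The paper's resolution is structurally different. Using Loxton and van der Poorten's lemma, one writes $\alpha_i=\zeta_i\bbeta^{\bmu_i}$ with $\zeta_i$ roots of unity and $\bbeta=(\beta_1,\ldots,\beta_t)$ having multiplicatively independent coordinates; the roots of unity are absorbed into the $M$-functions (Lemma~\ref{lem:powers}), and the system is rebuilt in $t$ variables at the point $\bbeta$, where Theorem~5.9 of \cite{AF24_Annals} does apply. The price is that the resulting functions $g_{i,j}(\z^{\bmu_i})$ are no longer in separated variables, so your closing specialization argument is unavailable; one instead needs a new function-level independence statement (Lemma~\ref{lem:purity_functions}: transcendence degrees add up for power series in monomials $\z^{\bmu_i}$ with pairwise $\mathbb Q$-linearly independent exponent vectors), proved via a convex-cone argument. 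Separately, the case of multiplicatively dependent $q_i$'s that you mention is handled not by bookkeeping inside one system but by partitioning the indices into multiplicative classes of the $q_i$ and invoking \cite[Corollary~3.9]{AF24_Annals} across classes whose spectral radii are pairwise multiplicatively independent. Without the change of variables and the purity lemma, your outline does not yield the theorem.
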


Theorem~\ref{thm: mainbis} strengthens part (i) of \cite[Theorem~1.1]{AF24_Annals} in which a stronger condition was required: the points $\alpha_i$ had to be (globally) multiplicatively independent and not just pairwise multiplicatively independent.  For instance, assuming that $f_1(z),f_2(z)$ and $f_3(z)$ are $M$-functions that take transcendental values at $\frac{1}{2},\frac{1}{5}$ and $\frac{1}{10}$ respectively, 
Theorem~\ref{thm: mainbis} implies that these three numbers are algebraically independent, while  \cite[Theorem~1.1]{AF24_Annals}  could not apply. 

We deduce from Theorem \ref{thm: mainbis} the following generalization of \cite[Theorem~2.3]{AF24_Annals}.

\begin{thm}\label{conj: strongv_bis}
	Let $r\geq 1$ be an integer. Let $b_1,\ldots,b_r$ be pairwise multiplicatively independent positive integers, 
	and, for every $i$, $1\leq i \leq r$, 
	let  $x_i$ be a real number that is automatic in base $b_i$.   
	Then the numbers $x_1,\ldots,x_r$ are algebraically independent over $\Q$, unless one of them is rational. 
\end{thm}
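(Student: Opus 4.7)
The plan is to deduce Theorem~\ref{conj: strongv_bis} from Theorem~\ref{thm: mainbis} by expressing each $x_i$ as the value at an explicit algebraic point of an appropriate $M_{b_i}$-function, so that the hypotheses of Theorem~\ref{thm: mainbis} become available. Since subtracting an integer preserves both automaticity in base $b_i$ and the properties at stake (algebraic independence over $\Q$ and rationality), one may freely replace each $x_i$ by its fractional part and assume $x_i\in [0,1)$.

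Fix $1\leq i\leq r$ and write the base-$b_i$ expansion $x_i=\sum_{n\geq 1} a_{i,n} b_i^{-n}$ with $a_{i,n}\in\{0,1,\ldots,b_i-1\}$. That $x_i$ is automatic in base $b_i$ is exactly the statement that the digit sequence $(a_{i,n})_{n\geq 1}$ is $b_i$-automatic. I then consider the power series
\[
f_i(z) \;:=\; \sum_{n\geq 1} a_{i,n}\, z^n \;\in\; \mathbb Q[[z]].
\]
By the standard correspondence between automatic sequences and Mahler functions (Becker's theorem, as recalled and used in \cite{AF24_Annals}), the generating series of a $b_i$-automatic sequence is an $M_{b_i}$-function; hence $f_i$ is an $M_{b_i}$-function. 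Setting $\alpha_i:=1/b_i\in\mathbb Q$, one has $0<|\alpha_i|<1$, the series defining $f_i(\alpha_i)$ converges, and $f_i(\alpha_i)=x_i$. Finally, two integers $b,b'\geq 2$ are multiplicatively independent if and only if $1/b$ and $1/b'$ are, so pairwise multiplicative independence of the $b_i$ transfers verbatim to pairwise multiplicative independence of the $\alpha_i$.

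All hypotheses of Theorem~\ref{thm: mainbis} are now in place with $\mathbb K=\mathbb Q$, and applying it yields algebraic independence of $x_1,\ldots,x_r$ over $\Q$ unless some $f_i(\alpha_i)=x_i$ lies in $\mathbb Q$, that is, unless some $x_i$ is rational. The deduction is formally identical to the one from \cite[Theorem~1.1]{AF24_Annals} to \cite[Theorem~2.3]{AF24_Annals}; the entire novelty is contained in Theorem~\ref{thm: mainbis}, whose weakening from global to pairwise multiplicative independence passes directly through the translation above. There is thus no real obstacle at this stage beyond checking the routine bookkeeping: the fractional-part reduction, the correct identification of the exceptional case $f_i(\alpha_i)\in \mathbb K$ with rationality of $x_i$, and the elementary fact that automaticity in base $b_i$ implies convergence and produces a genuine $M_{b_i}$-function via Becker's result.
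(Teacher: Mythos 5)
Your deduction is correct and is exactly the argument the paper has in mind: the paper explicitly omits the proof, stating that Theorem~\ref{conj: strongv_bis} follows from Theorem~\ref{thm: mainbis} just as \cite[Theorem~2.3]{AF24_Annals} follows from \cite[Theorem~1.1]{AF24_Annals}, and your write-up spells out precisely that reduction (fractional part, Becker's correspondence giving an $M_{b_i}$-function, evaluation at $\alpha_i=1/b_i$, and transfer of pairwise multiplicative independence from the $b_i$ to the $1/b_i$). No discrepancy with the paper's intended proof.
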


We omit the proof of Theorem~\ref{conj: strongv_bis} as it can be deduced from Theorem~\ref{thm: mainbis}, just as \cite[Theorem 2.3]{AF24_Annals}  can be deduced from \cite[Theorem 1.1]{AF24_Annals}.

\medskip

The rest of this note is devoted to the proof of Theorem~\ref{thm: mainbis}.  As with the proof of \cite[Theorem 1.1]{AF24_Annals},  
it mainly relies on some of the general results concerning Mahler's method in several variables proved in \cite{AF24_Annals} (e.g., Corollary~3.5,  Corollary~3.9, and Theorem~5.9).  The main novelty is the use of a trick introduced by Loxton and van der Poorten~\cite{LvdP78}  in this framework to deal with values of  Mahler functions at certain points with multiplicatively dependent coordinates.

\section{Proof of Theorem  \ref{thm: mainbis}}

In order to prove Theorem  \ref{thm: mainbis}, we first need three auxiliary results. 

\subsection{Auxiliary results} 

Our first auxiliary result is a lemma concerning algebraic numbers, on which Loxton and van der Poorten's trick is based. 

\begin{lem}\label{prop:LvdP_mutl_indep_points}
	Let $\alpha_1,\ldots,\alpha_r \in \Q$ be algebraic numbers such that $0 < \vert \alpha_i \vert < 1$ for every $i$, $1\leq i\leq r$. Then there exist multiplicatively independent algebraic numbers $\beta_1,\ldots,\beta_t \in \Q$, $0 < \vert \beta_j \vert < 1$, $1 \leq j \leq t$, roots of unity $\zeta_1,\ldots,\zeta_r$, and nonnegative integers $\mu_{i,j}$, $1\leq i \leq r$, $ 1\leq j \leq t$, such that
	$$
	\alpha_i= \zeta_i \prod_{j=1}^t \beta_j^{\mu_{i,j}},\quad  \forall i,\, 1\leq i \leq r\,.
	$$ 
\end{lem}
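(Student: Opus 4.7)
The plan is to pass from the multiplicative group $G=\langle\alpha_1,\ldots,\alpha_r\rangle\subset\Q^{*}$ to its quotient by the torsion subgroup $T$ (whose elements are roots of unity), turning the problem into a question about finitely many lattice vectors lying in a common open half-space of $\mathbb{R}^{s}$. The heart of the argument will then be a geometric inscription of a polyhedral cone inside a simplicial cone, followed by a lift back to algebraic numbers via extraction of roots.

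Concretely, I would pick lifts $\gamma_1,\ldots,\gamma_s\in G$ of a $\Z$-basis of $G/T$ (so $s\leq r$), and write each $\alpha_i=\delta_i\prod_k\gamma_k^{n_{i,k}}$ with $\delta_i\in T$. Setting $v_i:=(n_{i,1},\ldots,n_{i,s})\in\Z^{s}$ and $L(x):=\sum_k x_k\log|\gamma_k|$, the hypothesis $|\alpha_i|<1$ translates into $L(v_i)<0$ for every $i$, so the cone $C:=\sum_i\mathbb{R}_{\ge 0}v_i$ is a pointed, full-dimensional polyhedral cone inside $\{L<0\}\cup\{0\}$. The key step is then to inscribe $C$ in a simplicial cone whose generators $b_1,\ldots,b_s$ lie in $\mathbb{Q}^{s}\cap\{L<0\}$: choosing a rational linear functional $\ell$ close enough to $-L$ that $\ell(v_i)>0$ for every $i$, I would form the cross-section $P:=C\cap\{\ell=1\}$, which is a bounded rational polytope contained in the open convex set $\{\ell=1,\,L<0\}$, and inscribe $P$ in a simplex whose vertices $b_1,\ldots,b_s$ are rational and still lie in $\{L<0\}$ (feasible by density of $\mathbb{Q}$-points in the rational hyperplane $\{\ell=1\}$). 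These $b_j$ are $\mathbb{Q}$-linearly independent (being affinely independent in a hyperplane not through the origin); after writing $v_i=\sum_j\lambda_{i,j}b_j$ with $\lambda_{i,j}\in\mathbb{Q}_{\ge 0}$ and replacing each $b_j$ by $b_j/N$ for a common denominator $N$ of the $\lambda_{i,j}$, I obtain $v_i=\sum_j\mu_{i,j}b_j$ with $\mu_{i,j}\in\mathbb{Z}_{\ge 0}$.

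To finish, I would choose $M\geq 1$ with $Mb_j\in\Z^{s}$ for every $j$ and define $\beta_j$ as a fixed $M$-th root of $\prod_k\gamma_k^{(Mb_j)_k}\in G$. Then $|\beta_j|=\exp(L(b_j))<1$; multiplicative independence of the $\beta_j$ follows from $\mathbb{Q}$-linear independence of the $b_j$ by raising any putative relation $\prod_j\beta_j^{e_j}=1$ to the $M$-th power and reading off exponents in $G/T$; and both $\prod_j\beta_j^{\mu_{i,j}}$ and $\alpha_i/\delta_i$ are $M$-th roots of the same element $\prod_k\gamma_k^{Mn_{i,k}}\in G$, so they differ by a root of unity $\eta_i$, which gets absorbed by setting $\zeta_i:=\delta_i\eta_i^{-1}$. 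The main obstacle will be the inscription step, where rationality, simpliciality, and containment in $\{L<0\}$ must be enforced simultaneously: the trick of using a rational proxy $\ell$ for the typically transcendentally-valued functional $L$ is what makes these three constraints compatible.
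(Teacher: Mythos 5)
Your argument is correct. Be aware, though, that the paper does not actually prove this lemma: its ``proof'' is a one-line citation of \cite[Lemma 3]{LvdP78} (see also \cite[Lemma 3.4.9]{Ni_Liv}), so what you have done is reconstruct the classical argument, and your reconstruction follows the same strategy as those sources: pass to the finitely generated group $G=\langle\alpha_1,\ldots,\alpha_r\rangle$ modulo its torsion subgroup, encode the $\alpha_i$ as lattice vectors $v_i$ with $L(v_i)<0$, inscribe the cone they generate in a rational simplicial cone still lying in $\{L<0\}$, clear denominators, and extract $M$-th roots. All the individual steps check out: the $v_i$ are nonzero (no $\alpha_i$ is a root of unity since $|\alpha_i|<1$), the cone is contained in $\{L<0\}\cup\{0\}$, affine independence in the hyperplane $\{\ell=1\}$ does give linear independence of the $b_j$, and the root-extraction step correctly produces the $\eta_i$ absorbed into $\zeta_i$. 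The one step carrying real content is the inscription, and you have identified exactly why it works: $\{\ell=1,\ L<0\}$ is an unbounded convex region (an open half-space of the affine hyperplane, or the whole hyperplane in the degenerate case $L\propto\ell$), so the compact cross-section $P$ can be enclosed in an arbitrarily large simplex inside it and the vertices then perturbed to rational points; this would fail for a bounded region, which is precisely why isolating $L$ as the only non-rational constraint, via the rational proxy $\ell$, is the right move. If you write this up in full, the only details worth spelling out are the perturbation argument (put $P$ in the relative interior of the simplex so that small rational perturbations of the vertices preserve both containments) and the degenerate case just mentioned.
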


\begin{proof}
	This is \cite[Lemma 3]{LvdP78} (see also \cite[Lemma 3.4.9]{Ni_Liv}).
\end{proof}

Our second auxiliary result is the following result about $M$-functions.

\begin{lem}\label{lem:powers}
	Let $q\geq 2$ be an integer, $f(z)$ be an $M_q$-function and $\zeta$ be a root of unity. Then $f(\zeta z)$ is also an $M_q$-function.
\end{lem}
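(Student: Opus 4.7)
The plan is to produce a linear Mahler equation for $g(z) := f(\zeta z)$ by showing that the sequence of its $q$-Mahler iterates $g(z^{q^j}) = f(\zeta z^{q^j})$, $j \geq 0$, lies inside a single finite-dimensional $\mathbb{K}(z)$-vector space; a linear dependence among them then yields the desired equation after clearing denominators.

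Let $\sum_{i=0}^{N} a_i(z) f(z^{q^i}) = 0$ be a Mahler equation for $f$ with $a_i \in \mathbb{K}[z]$ and $a_0 a_N \neq 0$ (the extremal non-vanishing conditions being achieved by taking a minimal equation and, if necessary, factoring out a power of $\phi_q$). For any root of unity $\eta$ and integer $j \geq 0$, substituting $z \mapsto \eta z^{q^j}$ gives the master relation
\[
\sum_{i=0}^{N} a_i(\eta z^{q^j}) f(\eta^{q^i} z^{q^{j+i}}) = 0. \qquad(\ast)
\]
Let $\sigma$ denote the $q$-th power map on roots of unity and let $S$ be the (finite) forward $\sigma$-orbit of $\zeta$. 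Because $S$ is finite it decomposes as $S = C \sqcup T$, where $C$ is the eventual $\sigma$-cycle (on which $\sigma$ acts bijectively) and $T$ is the tail. I would then consider the $\mathbb{K}(z)$-vector spaces
\[
V := \mathrm{span}_{\mathbb{K}(z)}\{f(\eta z^{q^j}) : \eta \in S,\, j \geq 0\}, \qquad V_C := \mathrm{span}_{\mathbb{K}(z)}\{f(\eta z^{q^j}) : \eta \in C,\, j \geq 0\},
\]
and aim to prove the two statements $V = V_C$ and $\dim_{\mathbb{K}(z)} V_C \leq N|C|$, which together suffice since $\zeta \in S$ forces $g(z^{q^j}) \in V$ for all $j$.

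The bound on $V_C$ comes by solving $(\ast)$ for the top-order term: for $\eta \in C$ and $j \geq N$, the bijectivity of $\sigma|_C$ yields $\eta' := \sigma^{-N}(\eta) \in C$, and $(\ast)$ applied at $(\eta', j-N)$ combined with $a_N \neq 0$ expresses $f(\eta z^{q^j})$ as a $\mathbb{K}(z)$-combination of elements with strictly smaller $q$-index; a descending induction on $j$ shows $V_C$ is spanned by the $N|C|$ elements $\{f(\eta z^{q^j}) : \eta \in C, 0 \leq j < N\}$. The equality $V = V_C$ is the technical heart: for $\eta \in T$ of tail-depth $\tau(\eta) := \min\{k \geq 0 : \sigma^k(\eta) \in C\}$, solving $(\ast)$ for the bottom term using $a_0 \neq 0$ gives
\[
a_0(\eta z^{q^j}) f(\eta z^{q^j}) = -\sum_{i=1}^{N} a_i(\eta z^{q^j}) f(\sigma^i(\eta) z^{q^{j+i}}),
\]
and since $\tau(\sigma^i(\eta)) = \max(0, \tau(\eta) - i) < \tau(\eta)$ for each $i \geq 1$, an induction on $\tau(\eta)$ (uniformly in $j$) puts every $f(\eta z^{q^j})$ inside $V_C$.

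The main obstacle is the case $\gcd(q, \mathrm{ord}(\zeta)) > 1$, in which the $\sigma$-orbit of $\zeta$ genuinely has a nonempty tail: the naive approach of assembling a matrix Mahler system for the vector $(f(\eta z))_{\eta \in S}$ breaks down because $\sigma$ is not surjective on $S$, and one would be forced to enlarge $S$ by taking $q$-th roots indefinitely. The induction on tail-depth $\tau$ is precisely what circumvents this, by always pushing forward under $\sigma$ (which is always defined) rather than pulling back. When $\gcd(q, \mathrm{ord}(\zeta)) = 1$, one has $T = \emptyset$, only the $V_C$ bound is needed, and a one-step matrix-Mahler argument would also work.
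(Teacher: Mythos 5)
Your proof is correct, and its combinatorial engine --- the decomposition of the forward orbit of $\zeta$ under $\eta\mapsto\eta^q$ into a cycle $C$ and a tail $T$, with the cycle controlled by the leading coefficient $a_N$ and the tail absorbed using $a_0\neq 0$ --- is exactly the mechanism underlying the paper's proof; only the packaging differs. The paper avoids introducing the space $V$ altogether: it first replaces $\zeta$ by $\zeta_0=\zeta^{q^k}$ (an element of your cycle $C$, chosen so that its order is coprime to $q$), observes that $\zeta_0^{q^\ell}=\zeta_0$ for some $\ell$, so that substituting $\zeta_0 z$ into a $q^\ell$-Mahler equation for $f$ immediately yields a $q^\ell$-Mahler equation for $f(\zeta_0 z)$ (using that $M_q$- and $M_{q^\ell}$-functions coincide); it then crosses the whole tail in one step by substituting $\zeta z$ into the \emph{minimal $q^k$-Mahler equation} of $f$, which writes $f(\zeta z)$ as a $\Q(z)$-linear combination of the functions $f((\zeta z)^{q^{jk}})=f(\zeta_0^{q^{k(j-1)}}z^{q^{jk}})$, $j\geq 1$, all already known to be $M_q$-functions; the conclusion then follows from the closure of the class of $M_q$-functions under such combinations. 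Your route is more self-contained --- it needs neither the ring structure of $M_q$-functions nor the equivalence $M_q=M_{q^\ell}$ --- at the cost of the explicit bound $\dim V_C\leq N\lvert C\rvert$ and the induction on tail-depth, which the paper's choice of the exponent $q^k$ (killing the $q$-part of the order of $\zeta$ in one blow) renders unnecessary. Two details in your write-up deserve a cleaner justification: the normalization $a_0a_N\neq 0$ is obtained by the standard Cartier-operator reduction, not by ``factoring out a power of $\phi_q$'' (the coefficients $a_i(z)$ do not commute with the Mahler operator, so no such factorization exists in general); and at the end one should record that a $\mathbb{K}(z)$-linear dependence among the $g(z^{q^j})$, after clearing denominators and applying the same reduction, yields a $q$-Mahler equation of the normalized form required by the definition of an $M_q$-function.
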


\begin{proof}
   We first recall that the set of $M_q$-functions is a ring containing $\Q(z)\cap \Q[[z]]$ and that, given any  positive integer $\ell$, 
   a power series is an $M_q$-function if and only if it is an $M_{q^\ell}$-function.   
	Let $k$ be such that $\zeta_0:=\zeta^{q^k}$ has order coprime with $q$. Then there exists a positive integer $\ell$ such that $\zeta_0^{q^\ell}=\zeta_0$. 
	Since $f(z)$ is also an $M_{q^\ell}$-function, we deduce that $f(\zeta_0 z)$ is an $M_{q^\ell}$-function and hence an $M_q$-function. 
	The same argument applies to any power of $\zeta_0$, so that $f(\zeta_0^iz)$ is an $M_q$-function for every integer $i\geq 0$. Given a positive integer $j$,  
	substituting $z$ with $z^{q^{jk}}$ and taking $i:=q^{k(j-1)}$, we thus deduce that $f((\zeta z)^{q^{jk}})$ is an $M_q$-function. Now, substituting $\zeta z$ to $z$ in the minimal $q^k$-Mahler equation satisfied by $f(z)$, we can write $f(\zeta z)$ as a linear combination over $\Q(z)$ of  the series $f((\zeta z)^{q^{jk}})$, $j\in\{1,\ldots,r\}$, where $r$ is the order of this minimal equation. Since $f(\zeta z)$ is a power series, we can ensure that  $f(\zeta z)$ can in fact be written as a linear combination over $\Q(z)\cap \Q[[z]]$  of  some $M_q$-functions. It therefore follows that 
	$f(\zeta z)$ is an $M_q$-function, as  wanted.
\end{proof}

Our third auxiliary result is about algebraic independence of power series. 

\begin{lem}\label{lem:purity_functions}
	Let $r$ and $t$ be two positive  integers, $\bmu_1,\ldots,\bmu_r \in \N^t$ be  vectors that are pairwise linearly independent over $\mathbb Q$, and, for every $i$, $1\leq i \leq r$, let $m_i$ be a positive integer and $f_{i,1}(z),\ldots,f_{i,m_i}(z) \in \Q[[z]]$. Let $\z:=(z_1,\ldots,z_t)$ be a vector of indeterminates. 
	Then
	\begin{multline*}
	{\rm tr.deg}_{\Q(\z)}(f_{i,j}(\z^{\bmu_i})\,:\,1\leq i \leq r,\,1\leq j \leq m_i) \\= \sum_{i=1}^r	{\rm tr.deg}_{\Q(z)}(f_{i,j}(z)\,:\,1\leq j \leq m_i)\,.
	\end{multline*}
\end{lem}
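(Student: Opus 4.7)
My plan is the following. The inequality
\begin{equation*}
{\rm tr.deg}_{\Q(\z)}(f_{i,j}(\z^{\bmu_i})\,:\,1\leq i \leq r,\,1\leq j \leq m_i) \leq \sum_{i=1}^r {\rm tr.deg}_{\Q(z)}(f_{i,j}(z)\,:\,1\leq j \leq m_i)
\end{equation*}
is immediate: the substitution $z\mapsto\z^{\bmu_i}$ is a $\Q$-algebra embedding $\Q[[z]]\hookrightarrow\Q[[\z]]$ (pairwise linear independence forces $\bmu_i\neq\mathbf{0}$ when $r\geq 2$, and $r=1$ is trivial), which preserves algebraic relations; the family $(f_{i,j}(\z^{\bmu_i}))_j$ thus has transcendence degree at most $d_i:={\rm tr.deg}_{\Q(z)}(f_{i,j}(z):j)$ over $\Q(\z)$, and subadditivity for compositums yields the upper bound.

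For the reverse inequality, after replacing each family by a transcendence basis over $\Q(z)$, one must show that $(f_{i,j}(\z^{\bmu_i}))_{i,j}$ is algebraically independent over $\Q(\z)$ provided each $(f_{i,j}(z))_{j}$ is algebraically independent over $\Q(z)$. I would argue by induction on $r$. The base case $r=1$ is a purity statement, reducible to the single-variable setting by a generic specialization $\z\mapsto(z^{k_1},\ldots,z^{k_t})$ with integers $k_i$ chosen so that any hypothetical nontrivial polynomial relation in $\Q[\z,\{Y_j\}]$ remains nontrivial after substitution; the resulting relation among the $f_{1,j}(z^n)$ with $n=\bk\cdot\bmu_1>0$ then contradicts algebraic independence of $(f_{1,j}(z))_j$ over $\Q(z)$.

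For the inductive step, pairwise $\Q$-linear independence of the $\bmu_i$ is used crucially to construct a row-eliminating differential operator. Assuming $t\geq 2$ (the case $t=1$ forces $r=1$), one can find $\lambd\in\Q^t$ with $\lambd\cdot\bmu_r=0$ and $\lambd\cdot\bmu_i\neq 0$ for every $i\neq r$, by picking a point in the hyperplane $\lambd\cdot\bmu_r=0$ avoiding finitely many other hyperplanes, which is possible precisely because no $\bmu_i$ with $i\neq r$ is a rational multiple of $\bmu_r$. The Euler-type operator $E:=\sum_k \lambda_k z_k\partial/\partial z_k$ then annihilates every $f_{r,j}(\z^{\bmu_r})$, while sending $f_{i,j}(\z^{\bmu_i})$ for $i\neq r$ to a nonzero scalar multiple of $g_{i,j}(\z^{\bmu_i})$ with $g_{i,j}(z):=zf'_{i,j}(z)$. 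Applying $E$ to a hypothetical minimal-degree algebraic relation, one should obtain a new relation of strictly smaller $(Y_{i,j})_{i\neq r,j}$-degree, which after iteration and an application of the inductive hypothesis on the $r-1$ remaining rows produces a contradiction.

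The main obstacle I anticipate is the management of the auxiliary derivatives $g_{i,j}$, which are not in the original list and can proliferate under iteration of $E$. Closing the loop requires either enlarging the collection to include all iterated derivatives (and using a transcendence-degree argument to keep the enlarged family under control) or performing a Wronskian-style determinantal descent. This bookkeeping is the technical heart of the proof; the pairwise $\Q$-linear independence hypothesis enters exclusively at the construction of $E$, without which $E$ could simultaneously annihilate several rows and the descent would collapse.
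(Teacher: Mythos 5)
Your upper bound, the reduction to showing that algebraic independence of each one-variable family forces algebraic independence of the substituted families over $\Q(\z)$, and the construction of $\lambd\in\Q^t$ with $\lambd\cdot\bmu_r=0$ and $\lambd\cdot\bmu_i\neq 0$ for $i\neq r$ are all fine (as is, modulo standard details, your specialization argument for $r=1$). But the inductive step does not go through, and the obstacle you flag at the end is not bookkeeping: it is the point where the derivation approach breaks. Applying $E$ to a minimal relation $P\bigl(\z,\ldots,f_{i,j}(\z^{\bmu_i}),\ldots\bigr)=0$ yields
$$
(EP)\bigl(\z,\ldots\bigr)+\sum_{i\neq r}\sum_{j}(\lambd\cdot\bmu_i)\,\frac{\partial P}{\partial Y_{i,j}}\bigl(\z,\ldots\bigr)\, g_{i,j}(\z^{\bmu_i})=0,\qquad g_{i,j}(z):=zf_{i,j}'(z),
$$
which is not a relation of smaller degree among the original quantities but a relation in a strictly larger set of unknowns. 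Neither proposed repair closes the loop: adjoining all iterated derivatives gives a family whose one-variable transcendence degrees are not controlled by the hypotheses, and the statement you would then need for that enlarged family is the lemma itself, so the argument is circular; a Wronskian-type descent likewise needs information on how $f$ and $zf'$ interact, which is unavailable for arbitrary power series. So the technical heart of the proof is genuinely missing.

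For comparison, the paper avoids derivations entirely. After reordering so that $\bmu_1$ generates an extreme ray of the convex cone $\mathcal C$ spanned by $\bmu_1,\ldots,\bmu_r$, it proves a linear-disjointness statement: if $g_1(z),\ldots,g_m(z)$ are linearly independent over $\Q(z)$, then $g_1(\z^{\bmu_1}),\ldots,g_m(\z^{\bmu_1})$ are linearly independent over the ring $\Q[[\z^{\bmu_2},\ldots,\z^{\bmu_r}]][\z]$ (applied to monomials in the $f_{1,j}$, this gives the additivity). The proof is a support argument resting on a small geometric lemma: each arithmetic progression $\lambd+\N\bmu_1$ meets the support region $\Gamma+\mathcal C^\circ$ of the coefficients in only finitely many points, so sorting exponents by their class modulo $\Z\bmu_1$ converts the hypothetical relation into polynomial linear relations $\sum_i a_{i,\lambd}(z)g_i(z)=0$ over $\Q[z]$, forcing all coefficients to vanish. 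In effect, the productive use of your Euler operator $E$ is not to differentiate but to project onto its eigenspaces, i.e., to grade $\Q[[\z]]$ by support classes; replacing the application of $E$ by that projection (together with the cone-geometry input that makes the relevant graded pieces finite) is what turns your outline into a proof.
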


We recall that $\z^{\bmu_j}:=\prod_{i=1}^t z_i^{\mu_{i,j}}$. In order to prove Lemma~\ref{lem:purity_functions}, we first need to establish a simple result about cones in $\mathbb R^t$. 
We define the convex cone $\mathcal C$ spanned by some vectors $\bmu_1,\ldots,\bmu_r \in \mathbb R^t$  as the set
$$
\mathcal C:=\{a_1\bmu_1+\cdots + a_r\bmu_r\ : \ a_1,\ldots,a_r \in \mathbb R_{\geq 0}\}\,.
$$
A basis of  $\mathcal C$ is a minimal set of vectors in $\mathbb R^t$ such that the convex cone spanned by these vectors is $\mathcal C$. 

\begin{lem}
	\label{lem:cones}
	Let $\bmu_1,\ldots,\bmu_r \in \N^t$ be pairwise linearly independent over $\mathbb Q$ and $\mathcal C$ denote the convex cone spanned by $\bmu_1,\ldots,\bmu_r$. Let us assume that $\{\bmu_1,\ldots,\bmu_s\}$ is a basis of $\mathcal C$, for some $1\leq s\leq r$. Then, $\bmu_1$ does not belong to the convex cone $\mathcal C^\circ$ spanned by  $\bmu_2,\ldots,\bmu_r$. Furthermore, for any $\lambd \in \N^t$ and any finite set $\Gamma \subset \N^t$, the intersection
	$$
	(\lambd + \N\bmu_1) \bigcap (\Gamma + \mathcal C^\circ)
	$$
	is finite.
\end{lem}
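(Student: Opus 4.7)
My plan is to prove the two assertions separately, with the second following from the first via a linear-separation argument in $\mathbb{R}^t$.

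For the first assertion, I argue by contradiction and suppose $\bmu_1 \in \mathcal{C}^\circ$. Then $\bmu_1 = \sum_{i=2}^r a_i \bmu_i$ with $a_i \geq 0$. Using that $\{\bmu_1,\ldots,\bmu_s\}$ spans $\mathcal{C}$, I expand each $\bmu_i$ with $i>s$ as a nonnegative real combination $\bmu_i = \sum_{j=1}^s c_{i,j}\bmu_j$, $c_{i,j} \geq 0$. Substituting and collecting the coefficient of $\bmu_1$ yields a relation
$$
(1-B)\,\bmu_1 \;=\; \sum_{j=2}^{s} B_j\, \bmu_j,
$$
with $B:=\sum_{i>s} a_i c_{i,1} \geq 0$ and $B_j:= a_j + \sum_{i>s} a_i c_{i,j} \geq 0$ for $j\geq 2$. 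I then split into three cases. If $B<1$, dividing exhibits $\bmu_1$ as a nonnegative combination of $\bmu_2,\ldots,\bmu_s$, so the proper subset $\{\bmu_2,\ldots,\bmu_s\}$ already spans $\mathcal{C}$, contradicting the minimality of the basis. If $B>1$, the left-hand side has a strictly negative coordinate while the right-hand side has only nonnegative ones, which is absurd.

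The delicate case is $B=1$. Nonnegativity then forces $B_j = 0$ for every $j\geq 2$, whence $a_j=0$ for $2\leq j\leq s$ and $a_i c_{i,j}=0$ for $i>s$, $j\geq 2$. Since $B=1$ requires some $i_0>s$ with $a_{i_0}>0$ and $c_{i_0,1}>0$, the previous vanishing gives $c_{i_0,j}=0$ for $j\geq 2$, so $\bmu_{i_0}=c_{i_0,1}\,\bmu_1$ is a positive real multiple of $\bmu_1$. As $\bmu_1,\bmu_{i_0}\in\N^t\setminus\{0\}$, the scalar $c_{i_0,1}$ is in fact rational, contradicting the pairwise $\mathbb{Q}$-linear independence of $\bmu_1,\ldots,\bmu_r$.

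For the second assertion I combine the first part with a standard separation argument. Being finitely generated, $\mathcal{C}^\circ$ is a closed convex cone of $\mathbb{R}^t$, so, since $\bmu_1 \notin \mathcal{C}^\circ$, Hahn--Banach separation yields a linear form $\ell\colon \mathbb{R}^t\to\mathbb{R}$ with $\ell(\bmu_1)<0$ and $\ell(\v)\geq 0$ for every $\v \in \mathcal{C}^\circ$. If $\lambd+n\bmu_1 = \gamm + \v$ with $\gamm \in \Gamma$ and $\v \in \mathcal{C}^\circ$, applying $\ell$ gives
$$
\ell(\lambd) + n\,\ell(\bmu_1) \;\geq\; \ell(\gamm),
$$
and since $\ell(\bmu_1)<0$ and $\ell(\gamm)$ takes only finitely many values, this bounds $n$ from above, so only finitely many $n\in\N$ can occur. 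The main obstacle is precisely the case $B=1$ in the first assertion: it is the only step where the pairwise $\mathbb{Q}$-linear independence hypothesis genuinely intervenes, ruling out that some $\bmu_{i_0}$ with $i_0>s$ is collinear with $\bmu_1$; the remaining steps are either direct nonnegativity arguments or a textbook application of convex separation.
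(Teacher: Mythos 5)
Your proof is correct. The first assertion is handled exactly as in the paper: the same substitution of the basis expansion into the hypothetical relation $\bmu_1=\sum_{i\geq 2}a_i\bmu_i$, the same trichotomy on the coefficient $1-B$ of $\bmu_1$, and the same endgame in the case $B=1$, where nonnegativity forces all the other coefficients to vanish and produces some $\bmu_{i_0}$ collinear with $\bmu_1$, contradicting pairwise $\mathbb Q$-linear independence (your added remark that collinearity of two nonzero integer vectors is automatically a rational relation is a small but welcome precision that the paper leaves implicit). For the second assertion you diverge from the paper in the choice of tool: the paper takes $d:=\inf_{\kapp\in\mathcal C^\circ}\vert\bmu_1-\kapp\vert>0$ and bounds $k$ by $B/d$ via the observation that $\bmu/k\in\mathcal C^\circ$, whereas you separate $\bmu_1$ from $\mathcal C^\circ$ by a linear form $\ell$ with $\ell(\bmu_1)<0$ and $\ell\geq 0$ on $\mathcal C^\circ$, and bound $n$ by evaluating $\ell$ on the identity $\lambd+n\bmu_1=\gamm+\v$. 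The two mechanisms are equivalent in substance --- both rest on the (standard but not entirely trivial) fact that a finitely generated cone is closed, which the paper also uses silently when asserting $d>0$ --- and your linear-functional version has the minor advantage of avoiding the division by $k$ and the normalization $\bmu/k$; the paper's metric version avoids invoking Hahn--Banach. Either way the bound on $n$ is explicit and uniform over the finite set $\Gamma$, so the intersection is finite as claimed.
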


\begin{proof}
	Let us start with the first part of the proof. We first note that, since the vector $\bmu_1,\ldots,\bmu_r$ are pairwise linearly independent over $\mathbb Q$, $\bmu_1$ is a nonzero vector. By assumption, for every $i$, $s<i\leq r$, there exist nonnegative real numbers $\lambda_{i,j}$, $1\leq j \leq s$, such that 
	\begin{equation}\label{eq: lambda}
	\bmu_i = \sum_{j=1}^s \lambda_{i,j} \bmu_j \,.
	\end{equation}
	 Let us assume by contradiction that $\bmu_1$ belongs to the convex cone spanned by $\bmu_2,\ldots,\bmu_r$. Then, there exist nonnegative real numbers $\theta_2,\ldots,\theta_r$ such that 
	 \begin{equation}\label{eq: theta}
	 \bmu_1 =\sum_{j = 2}^r \theta_j \bmu_j\,.
	 \end{equation}
	 We deduce from \eqref{eq: lambda} and \eqref{eq: theta} that 
		\begin{eqnarray*}
			\bmu_1 & = & \sum_{j=2}^s \theta_j \bmu_j + \sum_{i=s+1}^r \theta_i\sum_{j=1}^s \lambda_{i,j}\bmu_j
			\\ & = & \left(\sum_{i=s+1}^r \theta_i\lambda_{i,1}\right)\bmu_1 + \sum_{j=2}^s \left(\theta_j+\sum_{i=s+1}^r \theta_i\lambda_{i,j}\right)\bmu_j\, 
		\end{eqnarray*}
and hence 
	$$
	\left(1 - \sum_{i=s+1}^r \theta_i\lambda_{i,1}\right)\bmu_1 =  \sum_{i=2}^s \left(\theta_j+\sum_{i=s+1}^r \theta_i\lambda_{i,j}\right)\bmu_j \, .
	$$
	On the one hand, if $1 - \sum_{i=s+1}^r \theta_i\lambda_{i,1}> 0$, then $\bmu_1$ would belong to the convex cone generated by $\bmu_2,\ldots,\bmu_s$, which would contradict the fact that $\{\bmu_1,\ldots,\bmu_s\}$ is a basis of $\mathcal C$. On the other hand, if $1 - \sum_{i=s+1}^r \theta_i\lambda_{i,1}<0$, since $\bmu_1 \neq 0$, at least one of the coordinates of $\bmu_1$ would be negative, which is impossible. Hence $1 - \sum_{i=s+1}^r \theta_j\lambda_{i,1}=0$ and we deduce that
	\begin{equation}\label{eq: thetalambda}
	\theta_j+\sum_{i=s+1}^r \theta_i\lambda_{i,j} = 0,\quad \forall j,\, 2\leq j \leq s\,.
	\end{equation}
	Since all these numbers are nonnegative, we first observe that $\theta_j = 0$, for every $j\in\{2,\ldots, s\}$. Since $\bmu_1$ is nonzero, we infer from \eqref{eq: theta} the existence of $i_0>s$  such that $\theta_{i_0} \neq 0$. Then, we deduce from \eqref{eq: thetalambda} that $\lambda_{i_0,j}=0$ for every $j\in \{2,\ldots, s\}$. Thus, it follows from \eqref{eq: lambda} that 
	$\bmu_{i_0} = \lambda_{i_0,1} \bmu_1$, providing a contradiction with the fact that $\bmu_1,\ldots,\bmu_r$ are pairwise linearly independent over 
	$\mathbb Q$. This concludes the first part of the proof.
	
	\medskip We now turn to the second part. Let $\lambd\in\mathbb N^t$ and $\Gamma$ be a finite subset of $\mathbb N^t$. Let $d:=\inf_{\kapp \in \mathcal C^\circ} \vert \bmu_1 - \kapp \vert$ denote the distance between $\bmu_1$ and $\mathcal C^\circ$. Since we just proved that $\bmu_1$ does not belong to $\mathcal C^\circ$, we easily deduce that $d>0$. 
	Set
	$$
	B:= \max\{\vert \gamm\vert +\vert\lambd\vert : \gamm \in\Gamma\}\,.
	$$
	Let $k\in\mathbb N$ be such that $\lambd +k \bmu_1\in\Gamma+\mathcal C^\circ$. Then 
	$$
	\lambd+k\bmu_1=\gamm +\bmu \,,
	$$
	for some $\gamm \in\Gamma$ and $\bmu\in \mathcal C^\circ$. Since $\bmu/k\in\mathcal C^\circ$, it follows that 
	$$
	\frac{B}{k} \geq  \frac{\vert \gamm-\lambd\vert }{k} = \left\vert \bmu_1- \frac{\bmu}{k}\right\vert  \geq d 
	$$
	and hence $k\leq B/d$.  We deduce that  
	$$
	(\lambd +\N\bmu_1 )\cap (\Gamma+\mathcal C^\circ) \subset \{\lambd + k\bmu_1 \,:\, 0 \leq k \leq B/d\}\,.
	$$
	In particular, it is a finite set.
\end{proof}

\begin{proof}[Proof of Lemma \ref{lem:purity_functions}]
	We argue by induction on $r$. When $r=1$, there is nothing to prove. We now assume that $r\geq 2$ and that the result is proven for $r-1$. Up to reordering the indices, we can assume that $\{\bmu_1,\ldots,\bmu_s\}$ is a basis of the cone $\mathcal C$ spanned by $\bmu_1,\ldots,\bmu_r$, for some $s\leq r$. 
	According to our induction hypothesis, we only have to prove that
		\begin{multline*}
	{\rm tr.deg}_{\Q(\z)}(f_{i,j}(\z^{\bmu_i})\,:\,1\leq i \leq r,\,1\leq j \leq m_i)\\ ={\rm tr.deg}_{\Q(z)}(f_{1,j}(z)\,:\,1\leq j \leq m_1)\\+	{\rm tr.deg}_{\Q(\z)}(f_{i,j}(\z^{\bmu_i})\,:\,2\leq i \leq r,\,1\leq j \leq m_i)\,.
	\end{multline*}
	 We are going to prove the following stronger fact: for any $g_1(z),\ldots,g_m(z)\in \Q[[z]]$ that are linearly independent over $\Q(z)$, the power series $$g_1(\z^{\bmu_1}),\ldots,g_m(\z^{\bmu_1})\in\Q[[\z]]$$ are linearly independent over the ring $\mathbb A:=\Q[[\z^{\bmu_2},\ldots,\z^{\bmu_r}]][\z]$.
	
	\medskip Let $g_1(z),\ldots,g_m(z)\in \Q[[z]]$ be linearly independent over $\Q(z)$ and let us assume by contradiction that the series $g_1(\z^{\bmu_1}),\ldots,g_m(\z^{\bmu_1})$ are linearly dependent over $\mathbb A$. Then, there exist  $h_1(\z),\ldots,h_m(\z)\in \mathbb A$, not all zero, such that	
	\begin{equation}\label{eq:rel_lin_surcorps}
	h_1(\z)g_1(\z^{\bmu_1}) + \cdots + h_m(\z)g_m(\z^{\bmu_1})=0\,.
	\end{equation}
	Let $\mathcal C^\circ$ denote the convex cone spanned by $\bmu_2,\ldots,\bmu_r$. By definition of $\mathbb A$, there exists a finite set $\Gamma \subset \N^t$ such that the support of each $h_i(\z)$ is included in $\Gamma + \mathcal C^\circ$. Thus, we can write 
	$$
	h_i(\z) = \sum_{\kapp \in \Gamma+\mathcal C^\circ} h_{i,\kapp}\z^\kapp, \quad\quad \forall i, 1\leq i \leq m.
	$$
	We also set $h_{i,\kapp}:=0$ when $\kapp \notin \Gamma+\mathcal C^\circ$.
	Considering the equivalence relation on $\mathbb N^t$ defined by $\lambd_1 \sim \lambd_2$ if $\lambd_1-\lambd_2\in \mathbb Z \bmu_1$, we can defined a set 
	$\Lambda \subset \N^t$ by picking the vector of smallest norm in each equivalence class, so that 
	$\N^t$ can be written as the disjoint union $\bigsqcup_{\lambd\in \Lambda} \left(\lambd + \N\bmu_1\right)$. For every $\lambd \in \Lambda$, set $\Gamma_\lambd := (\Gamma+\mathcal C^\circ) \cap (\lambd + \N\bmu_1)$. It follows from Lemma~\ref{lem:cones} that all the sets $\Gamma_\lambd$ are finite. Since the sets $\Gamma_\lambd$, $\lambd \in \Lambda$, form a partition of $\Gamma+\mathcal C^\circ$, and since every element of $\Gamma_\lambd$ can be written $\lambd + n\bmu_1$ for some $n \in \N$,  we have a decomposition of the form 
	$$
	h_i(\z) = \sum_{\lambd \in \Lambda}\z^{\lambd}a_{i,\lambd}(\z^{\bmu_1}),
	 \quad \forall i, 1\leq i \leq m\,,
	$$
	where $a_{i,\lambd}(z):=\sum_{n =0}^\infty h_{i,\lambd + n \bmu_1}z^n$.	Since all the sets $\Gamma_\lambd$ are finite, the $a_{i,\lambd}(z)$ are in fact polynomials. Since the sets $\lambd + \N\bmu_1$, $\lambd\in\Lambda$,  are disjoints, identifying the powers of $\z$ that belong to $\lambd + \N\bmu_1$ in \eqref{eq:rel_lin_surcorps} leads to 
	$$
	\sum_{i=1}^m a_{i,\lambd}(z)g_i(z)=0\,, \quad\quad  \forall \lambd\in\Lambda \,.
	$$
	Since the power series $g_1(z),\ldots,g_m(z)$ are linearly independent over $\Q(z)$, we deduce that $a_{i,\lambd}(z)=0$ for every pair $(i,\lambd)\in\{1,\ldots,m\}\times \Lambda$. 
	Hence $h_i(\z)=0$ for all $i\in\{1,\ldots,m\}$, which provides a contradiction.
\end{proof}

\subsection{Existence of a suitable linear Mahler system}

The following proposition ensures the existence of suitable linear Mahler systems in several variables that will be used to deduce Theorem~\ref{thm: mainbis} from the main results of \cite{AF24_Annals}.

\begin{prop}
	\label{prop:existenceMpoints}
	Let $q\geq 2$ be an integer, $\alpha_1,\ldots,\alpha_r \in \Q$ be pairwise multiplicatively independent, $0 < \vert \alpha_i \vert < 1$ and, for every $i$, $1\leq i \leq r$, $f_i(z)\in\Q[[z]]$ be an $M_q$-function that is well defined at $\alpha_i$. Then there exist a positive integer $t$, a positive integer $\ell$, a point $\bbeta \in \Q^t$, a matrix $T \in \M_{t}(\N)$, some vectors $\bmu_1,\ldots,\bmu_r \in \N^t$, and for every $i$,  $1\leq i \leq r$, roots of unity $\zeta_i$, a positive integer $m_i$ and some $M_q$-functions $g_{i,1}(z),\ldots,g_{i,m_i}(z) \in \Q[[z]]$ such that the following hold. 
	
	\begin{itemize}
	
	\item[\rm (a)] For  every  $i\in\{1,\ldots,r\}$, $\alpha_i=\zeta_i\bbeta^{\bmu_i}$.
	
	\item[\rm (b)] For  every  $i\in\{1,\ldots,r\}$, $f_i(\alpha_i)=g_{i,1}(\bbeta^{\bmu_i})$.

	\item[\rm (c)] For  every  $i\in\{1,\ldots,r\}$,  $g_{i,1}(z),\ldots,g_{i,m_i}(z)$ are related by a $q^{\ell}$-Mahler system and $\bbeta^{\bmu_i}$ is regular 
	w.r.t.\ this system.

	\item[\rm (d)] The functions $g_{i,j}(\z^{\bmu_i})$, $1\leq i \leq r$, $1\leq j \leq m_i$ are related by a $T$-Mahler system, where $\z=(z_1,\ldots,z_t)$ is a vector of indeterminates.
	
	\item[\rm (e)] The pair $(T,\bbeta)$ is admissible and the point $\bbeta$ is regular w.r.t.\ this system.

		\item[\rm (f)] The spectral radius of $T$ is equal to $q^\ell$.
			
		\item[\rm (g)] The vectors $\bmu_1,\ldots,\bmu_r$ are pairwise linearly independent over $\mathbb Q$.

	\end{itemize}
\end{prop}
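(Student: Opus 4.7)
The overall plan is to combine Lemmas~\ref{prop:LvdP_mutl_indep_points} and~\ref{lem:powers} to replace the original one-variable data by functions of multiplicatively independent variables, build a block-diagonal multivariate Mahler system, and then tune an iteration parameter $\ell$ so that the resulting pair $(T,\bbeta)$ behaves well.

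First, I would apply Lemma~\ref{prop:LvdP_mutl_indep_points} to $\alpha_1,\ldots,\alpha_r$ and collect the output data $\bbeta=(\beta_1,\ldots,\beta_t)$, roots of unity $\zeta_i$, and exponent vectors $\bmu_i\in\mathbb N^t$, so that $\alpha_i=\zeta_i\bbeta^{\bmu_i}$ (this is exactly (a)). Since $0<|\alpha_i|<1$ and $|\zeta_i|=1$, one has $0<\bbeta^{\bmu_i}<1$ in absolute value, and $\bmu_i\neq 0$. Next I would set
$$
g_{i,1}(z):=f_i(\zeta_i z).
$$
By Lemma~\ref{lem:powers}, each $g_{i,1}(z)$ is an $M_q$-function, and by construction $g_{i,1}(\bbeta^{\bmu_i})=f_i(\alpha_i)$, giving (b). Property (g) is then essentially automatic from the hypothesis: if $\bmu_i$ and $\bmu_j$ were $\mathbb Q$-linearly dependent one would get $p\bmu_i=q'\bmu_j$ with $p,q'\in\mathbb N^*$, hence $\alpha_i^p/\alpha_j^{q'}=\zeta_i^p/\zeta_j^{q'}$ would be a root of unity, contradicting pairwise multiplicative independence of the $\alpha_i$.

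Since each $g_{i,1}$ is an $M_q$-function it satisfies a minimal $q$-Mahler equation, hence also a $q^\ell$-Mahler equation for every $\ell\geq 1$. For each $i$ I would form a companion vector $G_i(z)=(g_{i,1}(z),\ldots,g_{i,m_i}(z))^{\top}$ of $M_q$-functions together with a matrix $A_i(z)\in\mathrm{GL}_{m_i}(\Q(z))$ such that
$$
G_i(z^{q^\ell})=A_i(z)\,G_i(z).
$$
This is (c), up to ensuring that $\bbeta^{\bmu_i}$ is regular for $A_i$; because the orbit $(\bbeta^{\bmu_i})^{q^{k\ell}}$ converges to $0$ and only finitely many points can be singular, this holds for all but finitely many values of $\ell$. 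I then set $T:=q^\ell I_t\in\M_t(\N)$, so that $\z\mapsto\z^T$ simultaneously sends each $\z^{\bmu_i}$ to $(\z^{\bmu_i})^{q^\ell}$. Substituting $z\leftarrow\z^{\bmu_i}$ in the equation above and assembling the $r$ relations into a block-diagonal system produces a $T$-Mahler system satisfied by the full family $(g_{i,j}(\z^{\bmu_i}))_{i,j}$, which is (d). Condition (f) is immediate since $T=q^\ell I_t$ has spectral radius $q^\ell$.

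The delicate point, which I expect to be the main obstacle, is property (e): admissibility of $(T,\bbeta)$ and regularity of $\bbeta$ with respect to the block-diagonal system. With $T=q^\ell I_t$, admissibility reduces to $\bbeta^{T^n}=\bbeta^{q^{n\ell}}\to 0$ (which follows from $|\beta_j|<1$) together with whatever arithmetic/Diophantine condition is encoded in the admissibility definition of \cite{AF24_Annals}; the latter is invariant under replacing $T$ by a power and is handled by taking $\ell$ large enough. Regularity of $\bbeta$ with respect to the block matrix demands that none of the iterates $\bbeta^{q^{n\ell}}$ meets the singular locus of the block matrix $A(\z)=\mathrm{diag}(A_1(\z^{\bmu_1}),\ldots,A_r(\z^{\bmu_r}))$; this locus is a proper algebraic subset of $(\Q^*)^t$, and again only finitely many $\ell$ are excluded. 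Thus one can choose a single $\ell$ for which (c) and (e) hold simultaneously, completing the construction.
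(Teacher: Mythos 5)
The architecture of your proposal (Loxton--van der Poorten decomposition, twisting by the roots of unity via Lemma~\ref{lem:powers}, block-diagonal system with $T=q^\ell{\rm I}_t$) matches the paper, and your arguments for (a), (b), (d), (f), (g) are fine. But there is a genuine gap at the regularity statements in (c) and (e). You take for $g_{i,1},\ldots,g_{i,m_i}$ the companion system of the minimal Mahler equation of $f_i(\zeta_i z)$ and claim that $\bbeta^{\bmu_i}$ is regular for this system ``for all but finitely many values of $\ell$.'' This is false: iterating a $q$-Mahler system $G(z^{q})=A(z)G(z)$ to level $q^\ell$ produces the matrix $A_\ell(z)=A(z^{q^{\ell-1}})\cdots A(z)$, and its evaluation along the sub-orbit $\{(\bbeta^{\bmu_i})^{q^{k\ell}}\}_{k\geq 0}$ still involves $A$ evaluated at \emph{every} point $(\bbeta^{\bmu_i})^{q^{n}}$, $n\geq 0$. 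So if some orbit point is a pole of $A$ or a zero of $\det A$ (which can genuinely happen for the companion system of the minimal equation), no choice of $\ell$ removes the singularity. The paper circumvents this by invoking the desingularization result \cite[Lemma~11.1]{AF24_Annals}, applied to $f_i(\zeta_i z)$ at the point $\bbeta^{\bmu_i}$: this replaces the naive companion family by a possibly larger family $g_{i,1},\ldots,g_{i,m_i}$ of $M_q$-functions, related by a $q^{\ell_i}$-Mahler system for which $\bbeta^{\bmu_i}$ \emph{is} regular and which still satisfies $g_{i,1}(\bbeta^{\bmu_i})=f_i(\alpha_i)$ (note that in the paper $g_{i,1}$ need not equal $f_i(\zeta_i z)$; only the value at the point is preserved). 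The common $\ell$ is then obtained by iterating these already-regular systems, and regularity of $\bbeta$ for the block system in (e) follows directly from regularity of each $\bbeta^{\bmu_i}$ for its block.

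A second, smaller gap concerns admissibility in (e). It is not ``handled by taking $\ell$ large enough'': admissibility of $(T,\bbeta)$ is a property of the point that does not improve under iteration. The paper gets it from \cite[Theorem~5.9]{AF24_Annals}, whose hypotheses are met precisely because Lemma~\ref{prop:LvdP_mutl_indep_points} delivers coordinates $\beta_1,\ldots,\beta_t$ that are multiplicatively \emph{independent} and of modulus less than $1$. You should make that the explicit source of admissibility rather than an unspecified Diophantine condition to be tuned by $\ell$.
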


\begin{proof} 
	We first infer from Lemma~\ref{prop:LvdP_mutl_indep_points} the existence of a positive integer $t$, multiplicatively independent algebraic numbers $\beta_1,\ldots,\beta_t$, $0 < \vert \beta_j \vert < 1$, $1 \leq j \leq t$, roots of unity $\zeta_1,\ldots,\zeta_r$ and nonnegative integers $\mu_{i,j}$, $1\leq i \leq r$, $ 1\leq j \leq t$, such that
	$$
	\alpha_i= \zeta_i \prod_{j=1}^t \beta_j^{\mu_{i,j}},\quad\quad \forall i,\, 1\leq i \leq r\,.
	$$ 
	Setting $\bbeta:=(\beta_1,\ldots,\beta_t)$ and $\bmu_i:=(\mu_{i,1},\ldots,\mu_{i,t})$, we get that (a) is satisfied. 	
	By Lemma~\ref{lem:powers}, each $f_i(\zeta_i z)$ is an $M_q$-function. Applying \cite[Lemma~11.1]{AF24_Annals} to the functions $f_{i}(\zeta_i z)$ and the points $\zeta_i^{-1}\alpha_i=\bbeta^{\bmu_i}$, we can find, for every $i\in\{1,\ldots,r\}$, some $M_q$-functions $g_{i,1}(z),\ldots,g_{i,m_i}(z)$ related by some $q^{\ell_i}$-Mahler system with respect to which $\bbeta^{\bmu_i}$ is a regular point and such that $g_{i,1}(\bbeta^{\bmu_i})=f_i(\alpha_i)$, so that (b) holds. 
	Iterating each one of these systems an appropriate number of times if necessary, we can further assume that the integers $\ell_i$, $1\leq i \leq r$, are all equal to some common integer, say $\ell$. Hence (c) is satisfied. 	
	Let $A_1(z),\ldots,A_r(z)$ denote the matrices associated with each of these Mahler systems. 
		Let $\z:=(z_1,\ldots,z_t)$ be a vector of indeterminates and let $B(\z)$ denote the block-diagonal matrix with blocks
	$A_1(\z^{\bmu_1}),\ldots,A_r(\z^{\bmu_r})$. Set $T:=q^\ell{\rm I}_t$. By construction, the functions $g_{i,j}(\z^{\bmu_i})$, $1\leq i \leq r$, $1\leq j \leq m_i$, are related by the $T$-Mahler system associated with the matrix $B(\z)$, which proves (d). Since, for every $i$, $\bbeta^{\bmu_i}$ is regular w.r.t.\ the $q^\ell$-Mahler system associated with the matrix 
	$A_i(z)$, the point $\bbeta$ is regular  w.r.t.\ the $T$-Mahler system with matrix $B(\z)$. Furthermore, since the coordinates of $\bbeta$ are multiplicatively independent and of modulus smaller that $1$, it follows from \cite[Theorem~5.9]{AF24_Annals} that $(T,\bbeta)$ is admissible, hence (e) is satisfied. 
	Since $T=q^\ell{\rm I}_t$, (f) also holds true. 
	Finally, since the numbers $\alpha_1,\ldots,\alpha_r$ are pairwise multiplicatively independent, so are the numbers $\bbeta^{\bmu_1},\ldots,\bbeta^{\bmu_r}$. Thus, the vectors $\bmu_1,\ldots,\bmu_r$ are pairwise linearly independent over $\mathbb Q$, which proves (g). 
\end{proof}

\subsection{Proof of Theorem \ref{thm: mainbis}}

We are now ready to prove our main result. We assume that none of the complex numbers $f_1(\alpha_1),\ldots,f_r(\alpha_r)$ belongs to $\mathbb K$, so that it remains to prove that they are algebraically independent over $\Q$.  We first notice that, according to \cite[Corollaire 1.8]{AF1}, this assumption implies that  the numbers $f_1(\alpha_1),\ldots,f_r(\alpha_r)$ are all transcendental.  

Let us divide the natural numbers $1,\ldots,r$ into $s$ classes $\mathcal I_1,\ldots,\mathcal I_s$, such that 
$i$ and $j$ belong to the same class if and only if  $q_i$ and $q_j$ are multiplicatively dependent. 
Since an $M_q$-function  is also an $M_{q^k}$-function for every integer $k\geq 1$, we can assume without any loss of generality that $q_i=q_j:=\rho_k$ whenever $i$ and $j$ belong to the same class $\mathcal I_k$.   
Set $\mathcal E := \{f_1(\alpha_1),\ldots,f_r(\alpha_r)\}$ and 
$\mathcal E_k:=\{f_{i}(\alpha_{i})\, : \, i \in \mathcal I_k\}$, $1\leq k \leq s$.

\medskip 
For each $k \in \{1,\ldots,s\}$, we consider the Mahler system given by Proposition~\ref{prop:existenceMpoints} when applied with $q=\rho_k$ and with the pairs 
$(f_i(z),\alpha_i)$, $i \in \mathcal I_k$. Let $\bbeta_k, (\bmu_i)_{i \in \I_k}, T_k, \z_k, (g_{i,j}(z))_{i\in \mathcal I_k,1\leq j \leq m_i}$ and $B_k(\z_k)$ denote, respectively, the corresponding algebraic point, vectors of nonnegative integers, transformation, vector of indeterminates, family of $M_{\rho_k}$-functions and matrix associated with the corresponding $T_k$-Mahler system. Proposition~\ref{prop:existenceMpoints} ensures that each pair $(T_k,\bbeta_k)$ is admissible and that the point $\bbeta_k$ is regular w.r.t. the $T_k$-Mahler system associated with the matrix $B_k(\z_k)$.  Since the numbers 
$\rho_1,\ldots,\rho_s$ are pairwise multiplicatively independent, Condition (f) of Proposition~\ref{prop:existenceMpoints} further implies that the spectral radii of $T_1,\ldots,T_s$ 
are pairwise multiplicatively independent. Thus, we can apply \cite[Corollary 3.9]{AF24_Annals} to these $s$ Mahler systems. 
We deduce that 
\begin{equation}\label{eq:Es}
{\rm tr.deg}_{\Q}(\mathcal E) = \sum_{k=1}^s {\rm tr.deg}_{\Q}(\mathcal E_k)\, .
\end{equation}
Now, let us fix $k\in\{1,\ldots,s\}$ and set $\mathcal F_k:=\{g_{i,j}(\bbeta_k^{\bmu_i})\, : \, i\in \mathcal I_k,\,1\leq j \leq m_i\}$ and 
$$
 \mathcal F_{k,i}:=\{(g_{i,j}(\bbeta_k^{\bmu_i})\,:\,1\leq j \leq m_i\},\quad i \in \mathcal I_k\,.
$$
Applying \cite[Corollary 3.5]{AF24_Annals} to the $T_k$-Mahler system associated with the matrix $B_k(\z_k)$, we obtain that
$$
{\rm tr.deg}_{\Q}(\mathcal F_k) = {\rm tr.deg}_{\Q(\z_k)}(g_{i,j}(\z_k^{\bmu_i})\,: \, i \in \mathcal I_k,\,1\leq j \leq m_i)\,.
$$
Since Condition (g) of Proposition~\ref{prop:existenceMpoints} ensures that the vectors $\bmu_i$, $i \in \I_k$, are pairwise linearly independent over $\mathbb Q$, it follows from Lemma~\ref{lem:purity_functions} that
\begin{multline*}
 {\rm tr.deg}_{\Q(\z_k)}(g_{i,j}(\z_k^{\bmu_i})\,: \, i \in \mathcal I_k,\,1\leq j \leq m_i) \\=
 \sum_{i \in \mathcal I_k}  {\rm tr.deg}_{\Q(z)}(g_{i,j}(z)\,: \, 1\leq j \leq m_i)\,.
\end{multline*}
For each $i \in \mathcal I_k$, we infer from Condition (c) of Proposition~\ref{prop:existenceMpoints} that we can apply \cite[Corollary~3.5]{AF24_Annals} to the Mahler system connecting $g_{i,1}(z),\ldots,g_{i,m_i}(z)$ at the regular 
point $\bbeta_k^{\bmu_i}$. We obtain that 
$$
{\rm tr.deg}_{\Q(z)}(g_{i,j}(z)\,: \, 1\leq j \leq m_i) = {\rm tr.deg}_{\Q}(\mathcal F_{k,i})\,.
$$
Combining these three identities, we get that 
\begin{equation}\label{eq: transdsum}
{\rm tr.deg}_{\Q}(\mathcal F_k) = \sum_{i \in \mathcal I_k}  {\rm tr.deg}_{\Q}(\mathcal F_{k,i})\,.
\end{equation}
We infer from Condition (b)  of Proposition~\ref{prop:existenceMpoints} that $f_i(\alpha_i) \in\mathcal F_{k,i}$, so that 
$\mathcal F_k=\cup_{i\in\mathcal I_k} F_{k,i}$ and $\mathcal E_k=\cup_{i\in\mathcal I_k} f_i(\alpha_i)$. 
Then, it follows from \cite[Lemma~10.3]{AF24_Annals}  and \eqref{eq: transdsum} that 
$$
{\rm tr.deg}_{\Q}(\mathcal E_k) = \sum_{i \in \mathcal I_k}  {\rm tr.deg}_{\Q}(f_i(\alpha_i))\,.
$$
Since $f_i(\alpha_i)$ is transcendental for all $i$, we have ${\rm tr.deg}_{\Q}(f_i(\alpha_i))=1$ and we deduce that ${\rm tr.deg}_{\Q}(\mathcal E_k) ={\rm Card}(\mathcal I_k)$. Then, it follows from \eqref{eq:Es} that
$$
{\rm tr.deg}_{\Q}(\mathcal E)=\sum_{k=1}^s {\rm Card}(\mathcal I_k)= r\,.
$$
Hence the numbers $f_1(\alpha_1),\ldots,f_r(\alpha_r)$ are algebraically independent over $\Q$, just as we wanted. \qed

\end{document}